\newcommand{\eps}{\varepsilon}
\newcommand{\wt}{\widetilde}
\newtheorem{example}[theorem]{\protect\examplename}
\providecommand{\examplename}{Example}
\providecommand{\remarkname}{Remark}
\newtheorem{assumption}[theorem]{\protect\assumptionname}
\providecommand{\assumptionname}{Assumption}
\title{Overcoming order reduction in diffusion-reaction splitting. Part 1: Dirichlet boundary conditions\thanks{This work is  supported by the Austrian Science Fund (FWF) -- project id: P25346.}}
\newcommand*\samethanks[1][\value{footnote}]{\footnotemark[#1]}
\author{
Lukas Einkemmer\thanks{Department of Mathematics, University of Innsbruck, Technikerstra\ss e 13, Innsbruck, Austria ({\tt lukas.einkemmer@uibk.ac.at}, {\tt alexander.ostermann@uibk.ac.at}).
} \and Alexander Ostermann\samethanks
}
\begin{document}

\maketitle

\slugger{sisc}{xxxx}{xx}{x}{x--x}

\begin{abstract}
For diffusion-reaction equations employing a splitting procedure is attractive as it reduces the computational demand and facilitates a parallel implementation. Moreover, it opens up the possibility to construct second-order integrators that preserve positivity. However, for boundary conditions that are neither periodic nor of homogeneous Dirichlet type order reduction limits its usefulness. In the situation described the Strang splitting procedure is not more accurate than Lie splitting. In this paper, we propose a splitting procedure that, while retaining all the favorable properties of the original method, does not suffer from order reduction. We demonstrate our results by conducting numerical simulations in one and two space dimensions with inhomogeneous and time dependent Dirichlet boundary conditions. In addition, a mathematical rigorous convergence analysis is conducted that confirms the results observed in the numerical simulations.
\end{abstract}

\begin{keywords}splitting scheme, Dirichlet boundary conditions, order reduction, diffusion-reaction equation, Strang splitting\end{keywords}

\begin{AMS} 65M20, 65M12, 65L04 \end{AMS}

\pagestyle{myheadings}
\thispagestyle{plain}
\markboth{L.~EINKEMMER, A.~OSTERMANN}{OVERCOMING ORDER REDUCTION IN SPLITTING METHODS}

\section{Introduction}\label{sec:intro}

Splitting methods are considered a promising approach for the numerical
solution of diffusion-reaction problems (see, for example, \cite{spee1998}, \cite{gerisch2002},
or \cite{hundsdorfer2003numerical}). Such methods allow for a separate
treatment of the (linear) diffusion and the nonlinear, but local,
reaction.

The linear constant coefficient diffusion problem on a tensor product domain can be solved efficiently
by fast Fourier methods. On more complicated geometries or for space dependent coefficients an implicit
time marching scheme is necessary (due to the stringent stability
requirement for explicit schemes). The application of such a scheme
(for example, the Crank--Nicolson method) yields an elliptic system
of linear equations. Such systems can be efficiently solved by a collection
of techniques referred to as fast Poisson solvers such as multigrid
methods (see, for example, \cite{hackbusch1985}) or potential methods
(see, for example, \cite{mckenney1995fast}).

The reaction problem is local and thus all the degrees of freedom
decouple. This greatly aids the parallelization of the algorithm and
allows an efficient implementation even if the reaction is stiff (although
in this case order reduction due to the stiffness of the problem is
possible; see, for example, \cite{verwer1998}). We will not consider this here.

It is clear from the discussion above that the splitting approach
is advantageous from an implementation standpoint as one essentially
substitutes a large nonlinear system of equations by a linear system
that is treated with a fast Poisson
solver (with complexity $\mathcal{O}(n\log n)$, where $n$ is the
number of degrees of freedom) and a set of ordinary differential equations
(with complexity $\mathcal{O}(n)$). Also splitting methods preserve
positivity if the corresponding solvers of the partial flows have
this property; see~\cite{hansen2012}.

In the case of periodic or homogeneous Dirichlet boundary conditions
the well known Lie and Strang splittings are of order one and two,
respectively. Furthermore, splitting methods with complex coefficients
can be constructed that achieve arbitrary high order (at the cost
of using complex quantities in the intermediate steps; see, for example, \cite{Castella2009, hansen2009}). However, for
more general Dirichlet boundary conditions order reduction for the
Strang splitting to order one in case of the infinity norm\footnote{If the error
is measured in a discrete $L^p$ norm fractional orders between $1$ and $2$ are
observed. See section \ref{sec:Numerical-results} for more details.}
is observed. Thus the Strang splitting
scheme is not more accurate than the Lie splitting scheme (see, for
example, \cite{hundsdorfer2003numerical}). Similar order reductions
for an advection-reaction problem have been observed in \cite{hundsdorfer1995}.
In the before mentioned paper a remedy has been proposed. However,
it is not clear how to extend this approach beyond the toy problem
considered there.

In this paper we consider the diffusion-reaction initial-boundary
value problem given by
\begin{equation}\label{eq:di-rea}
\begin{aligned}
\partial_{t}u & =Du+f(u),\\
u\vert_{\partial\Omega} & =b,\\
u(0) & =u_{0},
\end{aligned}
\end{equation}
where $D$ is an elliptic differential operator (for example, the Laplacian) and
$f\colon\mathbb{R}\to\mathbb{R}$ is the reaction term. We consider the domain $\Omega\subset\mathbb{R}^{d}$,
initial value $u_{0}$, and Dirichlet boundary conditions given by
$b\colon[0,T]\times\partial\Omega\to\mathbb{R}$. Note that, in general,
$b$ is allowed to depend on time.
Equation~\eqref{eq:di-rea} could equally represent the spatial discretization
of a diffusion-reaction problem. In that case, $D$ is a matrix with large norm and
the application of $f$ is understood componentwise. Usually the boundary condition
is included in $D$ in the (space) discretized equations. However,
for the discussion that follows we believe it is more useful to keep
the boundary condition separate. This also enables us to consider
the semi-discrete system (i.e., where time is discretized but space is left
continuous). Thus, in both the continuous and discrete case $D$ only models the
differentiation in the interior of the domain and is thus a non-invertible
linear operator or matrix.

In section \ref{sec:Theory} we will propose an alternate splitting
which does not suffer from the order reduction and thus significantly
increases the efficiency of the numerical integrator under consideration.
Numerical experiments will be performed for a variety of configurations
in both a single and two space dimensions (see sections \ref{sec:Numerical-results} and \ref{sec:Numerical-results-(2D)}).
For the one-dimensional examples we employ a finite difference approximation,
while in the two-dimensional case we will use a finite element space
discretization. In section~\ref{sec:convergence} we provide a rigorous
convergence analysis that confirms and explains the behavior observed in the numerical simulations.

\section{Description of the numerical method\label{sec:Theory}}

A number of numerical experiments has been conducted in the literature
that show order reduction for splitting methods applied to evolution problems
(see the results gathered in \cite{hundsdorfer2003numerical}). The
observation is made that for advection-reaction and diffusion-reaction
equations the order reduction is usually not present for homogeneous
Dirichlet boundary conditions. However, even for very simple non-homogeneous
boundary conditions Strang splitting is only of order one (let us
also refer to the numerical simulations conducted in the next section).
In the splitting procedure no boundary condition can be imposed for
the reaction term, while the boundary condition for the diffusion
term is the same as that for the original problem (Dirichlet boundary
conditions in our case).

In \cite{hansen2012} a convergence proof for diffusion-reaction problems
in an abstract setting was conducted. Among the assumptions of the
proof that the Strang splitting scheme is of order two is the requirement
that the evolution of the reaction partial flow leaves the domain
of $L^{2}$ invariant, where $L$ denotes a second-order strongly elliptic
differential operator (e.g., the Laplacian) endowed with the appropriate
boundary conditions. The required differentiability is usually no problem.
However, the domain
of $L^{2}$, denoted by $\mathcal{D}(L^{2})$, depends crucially on
the boundary condition as well. In most cases physically it is required
that $f(0)=0$. That is, if the concentration of a given substance
is zero, the reaction can not change this state. Then, for homogeneous
Dirichlet boundary conditions $\mathcal{D}(L^{2})$ is left invariant
by the reaction partial flow. For other boundary conditions, however,
this is not the case and order reduction is encountered.

Therefore, we propose to rewrite the problem in such a way that homogeneous
boundary conditions can be imposed. To that end, let us introduce
a function $z$ that is determined by the following elliptic problem
\begin{align*}
Dz & =0,\\
z\vert_{\partial\Omega} & =b.
\end{align*}
That is, $z$ is the harmonic (in case of the Laplacian) continuation
of the boundary data $b$. Then, let us define $\wt{u}=u-z$ which satisfies
\begin{equation}\label{eq:u-tilde}
\begin{split}
\partial_{t}\wt{u} & =D\wt{u}+f(\wt{u}+z)-\partial_{t}z,\\
\wt{u}\vert_{\partial\Omega} & =0,\\
\wt{u}(0) &= u_0-z_0.
\end{split}
\end{equation}
A similar approach has been considered in \cite{hundsdorfer1995}
for an advection-reaction equation. There the variation-of-constants
formula is now applied which yields an expansion which can be compared
to the exact solution. This then suggests a modification of the splitting
procedure. However, in that case we have to integrate backward in
time and it is not clear if this approach can be extended to a diffusion-reaction
equation.

\begin{algorithm}[t]
\caption{\quad Modified Lie splitting for~\eqref{eq:di-rea}\label{alg:lie}}
\begin{enumerate}
\item Solve $Dz_{0}=0$ using the boundary condition $z_{0}\vert_{\partial\Omega}=b(0)$.
\item Compute the initial value $\wt{w}(0)=u_{0}-z_{0}$.
\item Compute the solution of (\ref{eq:splitstep-B}) to obtain $\wt{w}(\tau)$.
\item Compute the solution of (\ref{eq:splitstep-A}) with initial value
$\wt{w}(\tau)$ using homogeneous Dirichlet boundary conditions
to obtain $\wt{v}(\tau)$.
\item Solve $Dz_{1}=0$ using the boundary condition $z_{1}\vert_{\partial\Omega}=b(\tau)$.
\item Set $u_1=\wt{v}(\tau)+z_{1}$.
\end{enumerate}
\end{algorithm}

We propose to apply a splitting directly to (\ref{eq:u-tilde}). However,
we still need a compatibility condition for the nonlinearity; that
is, we want to split the nonlinearity $f(\wt{u}+z)$ into a term $g(t,\wt{u})$ such
that $g(t,0)=0$ and a second term that does not depend on $\wt{u}$. These
requirements lead to the obvious choice of the two partial flows
given by
\begin{equation}\label{eq:splitstep-A}
\begin{aligned}
\partial_{t}\wt{v}&=D\wt{v}+f(z)-\partial_{t}z,\\
\wt{v}\vert_{\partial\Omega} & =0\\
\end{aligned}
\end{equation}
and
\begin{equation}\label{eq:splitstep-B}
\partial_{t}\wt{w}=f(\wt{w}+z)-f(z),
\end{equation}
respectively. The modified nonlinearity is now given by $g(t,u)=f(u+z(t))-f(z(t))$. Its
explicit time dependence is a consequence of the (potential) time dependence of $z$.
The new nonlinearity satisfies $g(t,0)=0$ as required.

One time step of size $\tau$ from $t=0$ to $t=\tau$ of the Lie splitting scheme with
initial value $u_{0}$ proceeds as shown in Algorithm~\ref{alg:lie}. Here, we started
with the nonlinear flow, followed by the linear one. The corresponding adaption for
Strang splitting is obvious. The method that starts with a half step of the linear
flow is given in Algorithm~\ref{alg:strang}.

\begin{algorithm}[t]
\caption{\quad Modified Strang splitting for~\eqref{eq:di-rea}\label{alg:strang}}
\begin{enumerate}
\item Solve $Dz_{0}=0$ using the boundary condition $z_{0}\vert_{\partial\Omega}=b(0)$.
\item Compute the initial value $\wt{v}(0)=u_{0}-z_{0}$.
\item Compute the solution of (\ref{eq:splitstep-A}) using homogeneous Dirichlet boundary conditions
to obtain $\wt{v}(\frac{\tau}2)$.
\item Compute the solution of (\ref{eq:splitstep-B}) with initial value $\wt{w}(0)= \wt{v}(\frac{\tau}2)$
to obtain $\wt{w}(\tau)$.
\item Compute the solution of (\ref{eq:splitstep-A}) with initial value
$\wt{w}(\tau)$ using homogeneous Dirichlet boundary conditions
to obtain $\wt{v}(\frac{\tau}{2})$.
\item Solve $Dz_{1}=0$ using the boundary condition $z_{1}\vert_{\partial\Omega}=b(\tau)$.
\item Set $u_1=\wt{v}(\frac{\tau}{2})+z_{1}$.
\end{enumerate}
\end{algorithm}

The crucial point here is that the modifications added to the discretized
Laplacian in equation (\ref{eq:splitstep-A}) do not negatively
impact our ability to efficiently compute a numerical approximation
as only a position (and possibly time) dependent source
term is added. This poses no additional difficulty for applying fast
Fourier methods or most fast Poisson solvers. More generally, we can employ numerical
methods referred to as exponential integrators (see, for example, \cite{hochbruck2010})
to approximate the solution of
\begin{equation}\label{eq:spliA-0}
\partial_{t}\wt{v}=L\wt{v}+f(z)-\partial_{t}z,
\end{equation}
where $L$ denotes the operator $D$ equipped with homogeneous Dirichlet boundary conditions.
If the boundary conditions in \eqref{eq:di-rea} are time invariant
(the simplification we will consider in the following example) the exponential Euler method
\[
\wt{v}(t) = \mathrm{e}^{t L}\wt{v}(0) + t \varphi_1 (t L) f(z),
\]
where $\varphi_1$ is an entire function of $L$, is exact. For time dependent boundary
conditions a second-order exponential integrator can be employed (for more details see \cite{hochbruck2010}).
A disadvantage of this approach is that due to the requirement of evaluating the $\varphi_1$ function a true black-box solver for \eqref{eq:spliA-0} can not be used. Furthermore, preconditioning is difficult in this formulation. In such a case we can employ a class of methods referred to as IMEX (IMplicit EXplicit). In this case the operator $L$ in \eqref{eq:spliA-0} is treated implicitly (ideally with a good preconditioner) while the additional non-stiff term is integrated explicitly. For a more detailed discussion see \cite{garcia2014}.

\begin{example}[Time independent boundary conditions] An important simplification
constitutes the case where $b$ is independent of time. In this case $z$ is
also independent of time and can be precomputed. Moreover, by setting
$\wt{v} = v-z$, $\wt{w} = w-z$, we can perform the splitting more directly. Instead of \eqref{eq:splitstep-A}, \eqref{eq:splitstep-B} we simply consider
\begin{align*}
\partial_{t}v & =Dv+f(z)\\
v\vert_{\Omega} & =b
\end{align*}
and
\[
\partial_{t}w=f(w)-f(z).
\]
\end{example}

\section{Convergence analysis}\label{sec:convergence}

In light of the method described in the previous section, let us consider the following abstract evolution equation
\begin{equation}\label{eq:abstract-evolution}
\begin{aligned}
	\partial_t u &= Au + g(t,u) + k(t), \\
	u(0) &= u_0.
\end{aligned}
\end{equation}
This is in fact problem \eqref{eq:u-tilde} with the boundary conditions included in the domain of the operator $A$. For example, in the case of a strongly elliptic second-order differential operator $D$ on $L^2(\Omega)$, it holds that $\mathcal D(A) = H^2(\Omega)\cap H^1_0(\Omega)$ and $A\psi=D\psi$ for all test functions in $\Omega$.

In this situation, $A$ is the infinitesimal generator of an analytic semigroup $\mathrm{e}^{tA}$ and there exists a constant $\omega\ge 0$ such that the fractional powers $(\omega I - A)^\alpha$ are well defined for $\alpha\in \mathbb R$; see, \cite[Chap.~1.4]{henry81}. By a simple rescaling argument one can always take $\omega = 0$. This will be done henceforth.

Let us now proceed by splitting equation \eqref{eq:abstract-evolution} into the two partial flows given by
\[
\partial_t v(t)=Av(t)+k(t)
\]
and
\[
\partial_t w(t) = g(t,w(t)),
\]
respectively. Depending on the choice of $g$ and $k$ this represents the classical splitting ($g=f$ and $k=0$) or the modified splitting ($g=f-k$). In the latter case $k$ is chosen such that that the compatibility condition $g(t,0)=0$ is satisfied.

\subsection{Lie and modified Lie splitting}
In the above setting the Lie splitting operator $\mathcal L_\tau$ is given by\footnote{One could also reverse the order and consider the splitting
$$
\mathcal L_\tau z = \varphi_{\tau}^{g}\bigl(\varphi^{A,k}_\tau(z) \bigr)
$$
instead. As its analysis is very similar to that of \eqref{eq:L-Split}, we do not consider it here.}
\begin{equation}\label{eq:L-Split}
\mathcal L_\tau z = \varphi^{A,k}_\tau\bigl( \varphi_{\tau}^{g}(z) \bigr),
\end{equation}
where $\varphi_{\tau}^g(z)$ denotes $w(t_n+\tau)$ with initial value $w(t_n)=z$ and $\varphi_{\tau}^{A,k}(z)$ denotes $v(t_n+\tau)$ with initial value $v(t_n)=z$.

In order to analyze the splitting scheme we first consider its local error. Thus, let us express $v$ as
\[
v(t_n+\tau) = \varphi_\tau^{A,k}(z) = \mathrm{e}^{\tau A} z + \int_0^{\tau} \mathrm{e}^{(\tau-s)A} k(t_n+s)\,\mathrm{d}s
\]
and $w$ as
\[
w(t_n+\tau) = z + \tau g(t_n,z) + \int_0^\tau (\tau-s)w^{\prime \prime}(t_n+s)\,\mathrm{d}s.
\]
For the Lie splitting scheme this gives
\begin{equation}\label{eq:lie-entwickelt}
\begin{aligned}
\mathcal L_{\tau}z = \mathrm{e}^{\tau A}z+\tau \mathrm{e}^{\tau A} g(t_n,z) &+ \int_0^{\tau} \mathrm{e}^{(\tau-s)A} k(t_n+s)\,\mathrm{d}s\\
& + \int_0^\tau \mathrm{e}^{\tau A}(\tau-s)w^{\prime \prime}(t_n+s)\,\mathrm{d}s.
\end{aligned}
\end{equation}
Now, let us expand the exact solution of equation \eqref{eq:abstract-evolution} with initial value $u(t_n)=z$
\begin{equation}\label{eq:expansion-exact-1}
\begin{aligned}
u(t_n+\tau) = \mathrm{e}^{\tau A} z &+ \int_0^{\tau} \mathrm{e}^{(\tau-s)A} k(t_n+s)\,\mathrm{d}s\\
&+ \int_0^\tau \mathrm{e}^{(\tau-s)A} g(t_n+s,u(t_n+s))\,\mathrm{d}s.
\end{aligned}
\end{equation}
Combining these results we get for the local error
\begin{subequations}\label{eq:local-error}
\begin{equation}
\mathcal L_{\tau}u(t_n) - u(t_n+\tau)= \int_0^\tau \mathrm{e}^{\tau A}(\tau-s) w^{\prime \prime}(t_n+s)\,\mathrm{d}s -
\int_0^\tau \int_0^s \ell_n^{\prime}(\xi) \,\mathrm{d}\xi \mathrm{d}s,
\end{equation}
where
\begin{equation}\label{eq:le-ln}
\ell_n(s) = \mathrm{e}^{(\tau-s) A}g(t_n+s,u(t_n+s)).
\end{equation}
\end{subequations}
What we observe here is that to bound $\ell_n^{\prime}$ we need to bound $A g(t,u(t))$. This can be achieved for the modified splitting for sufficiently smooth $g$ as the compatibility condition at the boundary is satisfied. Thus, we conclude that the modified splitting has a consistency error proportional to $\tau^2$. For the classical splitting, however, $A g(t,u(t))$ can not be bounded as $g(t,0)\ne 0$ in general. Thus, the classical Lie splitting scheme has a consistency error proportional to $\tau$ only. However, in the numerical simulations conducted in the next section we will observe that also the classical splitting is convergent of order one. We will now explain this behavior as a consequence of the parabolic smoothing property.

Henceforth, we will employ the following assumption on the data of~\eqref{eq:di-rea}.

\begin{assumption} \label{ass1}
Let $D$ be a strongly elliptic differential operator with smooth coefficients, $f$ continuously differentiable, $b$ continuous in $t$, and assume that $u_0$ is spatially smooth and satisfies the boundary conditions.
\end{assumption}

Under these assumptions $A$ generates an analytic semigroup, $g$ is continuously differentiable and $k$ is continuous. Moreover, as a consequence of \cite[Thm.~3.5.2]{henry81}, the solution $u$ of \eqref{eq:abstract-evolution} is continuously differentiable.

\begin{theorem}[Convergence of the classical Lie splitting] \label{thm:lie-classic-o1}
Under Assumption~\ref{ass1}, the classical Lie splitting is convergent of order $\tau \left\vert \log \tau \right\vert$, i.e., the global error satisfies the bound
$$
\|u_n-u(t_n)\| \le C\tau (1+\left\vert \log \tau \right\vert), \qquad 0\le n\tau \le T,
$$
where the constant $C$ depends on $T$ but is independent of $\tau$ and $n$.
\end{theorem}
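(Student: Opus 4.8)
The plan is to run a Lady Windermere's fan argument — bound the one-step local error, propagate it through the discrete evolution operators, and sum over all steps — the decisive point being that the single non-smooth contribution to the local error is damped by the analytic semigroup $\mathrm{e}^{tA}$ and, after summation, produces a harmonic sum, hence the logarithmic factor. First I would set up the error recursion. Put $e_n = u_n - u(t_n)$; then $e_0 = 0$ and, by \eqref{eq:L-Split} together with $\varphi_\tau^{A,k}(a) - \varphi_\tau^{A,k}(b) = \mathrm{e}^{\tau A}(a-b)$,
\[
e_{n+1} = \mathrm{e}^{\tau A}\bigl(\varphi_\tau^{g}(u_n) - \varphi_\tau^{g}(u(t_n))\bigr) + d_{n+1},\qquad d_{n+1} := \mathcal L_\tau u(t_n) - u(t_{n+1}),
\]
where $d_{n+1}$ is the local error \eqref{eq:local-error}. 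Since $g$ is $C^1$ and (by the bootstrap below) all iterates remain in a fixed bounded set $\mathcal B$ on which $f$, hence $g$, is Lipschitz with a $\tau$-independent constant, one has $\varphi_\tau^{g}(u_n) - \varphi_\tau^{g}(u(t_n)) = J_n e_n$ with a bounded linear operator $J_n = I + \tau B_n$, $\|B_n\| \le C$. Solving the recursion with $e_0 = 0$,
\[
e_n = \sum_{j=0}^{n-1}\mathcal S_{n,j}\,d_{j+1},\qquad \mathcal S_{n,j} := \mathrm{e}^{\tau A}J_{n-1}\,\mathrm{e}^{\tau A}J_{n-2}\cdots\mathrm{e}^{\tau A}J_{j+1}
\]
(the empty product being $I$). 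Uniform stability $\|\mathcal S_{n,j}\| \le C$ for $0\le j<n$, $n\tau\le T$, follows from the boundedness of $\mathrm{e}^{tA}$ and the Lipschitz bound on $\varphi_\tau^{g}$ via a discrete Gronwall argument.

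Next I would massage the local error. In \eqref{eq:local-error}, \eqref{eq:le-ln} set $G_n(\xi) := g(t_n+\xi, u(t_n+\xi))$; this is $C^1$ with $\|G_n\|,\|G_n'\|$ bounded uniformly on $[0,T]$, using that $u$ is continuously differentiable \cite[Thm.~3.5.2]{henry81}. From $\ell_n'(\xi) = -A\mathrm{e}^{(\tau-\xi)A}G_n(\xi) + \mathrm{e}^{(\tau-\xi)A}G_n'(\xi)$, a Taylor expansion $G_n(\xi) = G_n(0) + \int_0^\xi G_n'$, and the identity $\int_0^\tau(\mathrm{e}^{\tau A}-\mathrm{e}^{\sigma A})\,\mathrm{d}\sigma = \int_0^\tau\rho\,A\,\mathrm{e}^{\rho A}\,\mathrm{d}\rho$, I would arrive at
\[
d_{n+1} = \Bigl(\int_0^\tau \rho\, A\,\mathrm{e}^{\rho A}\,\mathrm{d}\rho\Bigr)G_n(0) + r_{n+1},\qquad \|r_{n+1}\| \le C\tau^2,
\]
where $r_{n+1}$ gathers the $w''$-integral from \eqref{eq:local-error}, the $G_n'$-term, and the second-order Taylor remainder, each $O(\tau^2)$ because the relevant integrand is integrable against the bounded semigroup. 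The leading term is what produces the order reduction: for the classical splitting $g(t,0)\ne 0$, so $G_n(0)\notin\mathcal D(A)$ in general and this term is only $O(\tau)$ in norm (from $\|A\mathrm{e}^{sA}\|\le M_1/s$ one gets $\bigl\|\int_0^\tau\rho\,A\,\mathrm{e}^{\rho A}\,\mathrm{d}\rho\bigr\|\le M_1\tau$); nevertheless it is perfectly well defined, $\rho\,A\,\mathrm{e}^{\rho A}$ being a bounded operator. I keep it unreduced so as to exploit the semigroup standing in front of it in the sum.

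Then I would sum. The smooth part is easy: $\bigl\|\sum_j\mathcal S_{n,j}r_{j+1}\bigr\| \le C\tau^2 n \le CT\tau$. For the leading part I would use that $\mathcal S_{n,j}$ carries, up to the bounded perturbations $J_i$, the smoothing factor $\mathrm{e}^{(t_n-t_{j+1})A}$. Writing the telescoping identity between $\prod\mathrm{e}^{\tau A}J_i$ and $\mathrm{e}^{(t_n-t_{j+1})A}$ — each correction term carrying a factor $\tau$ from some $J_i-I$ — and using $\|A\mathrm{e}^{sA}\|\le M_1/s$, I get for $j\le n-2$
\[
\Bigl\|\mathcal S_{n,j}\Bigl(\int_0^\tau\rho\,A\,\mathrm{e}^{\rho A}\,\mathrm{d}\rho\Bigr)\Bigr\| \le M_1\int_0^\tau\frac{\rho}{t_n-t_{j+1}+\rho}\,\mathrm{d}\rho + C\tau^2\bigl(1+|\log\tau|\bigr) \le \frac{C\tau}{n-1-j} + C\tau^2\bigl(1+|\log\tau|\bigr),
\]
while the single term $j=n-1$ is $O(\tau)$. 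Hence, since $\|G_j(0)\|$ is bounded, $n\le T/\tau$ and $\sum_{m=1}^{n-1}1/m\le 1+\log n$,
\[
\Bigl\|\sum_{j=0}^{n-1}\mathcal S_{n,j}\Bigl(\int_0^\tau\rho\,A\,\mathrm{e}^{\rho A}\,\mathrm{d}\rho\Bigr)G_j(0)\Bigr\| \le C\tau + C\tau\sum_{m=1}^{n-1}\frac1m + C\tau\bigl(1+|\log\tau|\bigr) \le C\tau\bigl(1+|\log\tau|\bigr).
\]
Adding the two contributions yields $\|e_n\|\le C\tau(1+|\log\tau|)$, which is the assertion.

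Two supporting points remain, one routine and one the real obstacle. The bootstrap: assuming inductively $\|e_j\|\le C\tau(1+|\log\tau|)\le\rho_0$ for $j\le n$ (valid for $\tau$ small) places each $u_j$ in a fixed neighbourhood of the exact trajectory, on which all constants above are $\tau$-independent; the estimate just derived then reproduces the bound at level $n+1$, closing the induction. The main obstacle is the rigorous form of the step "$\mathcal S_{n,j}$ behaves like $\mathrm{e}^{(t_n-t_{j+1})A}$": because the linearized reaction flows $J_i$ do not commute with $A$, one cannot simply factor $\mathrm{e}^{(t_n-t_{j+1})A}$ out of $\mathcal S_{n,j}$, and the telescoping expansion must be organized so that the commutator corrections — each controlled via $\|A\mathrm{e}^{sA}\|\le M_1/s$ together with the accompanying factor $\tau$ — stay summable and do not destroy the logarithmic bound. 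Everything else ($C^1$-dependence and local Lipschitz continuity of the reaction flow, integrability of the remainder integrands, the discrete Gronwall inequality) is standard parabolic-splitting bookkeeping.
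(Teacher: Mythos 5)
Your overall strategy --- Lady Windermere's fan, a local error containing one unbounded application of $A$, parabolic smoothing of that application by the accumulated semigroup, and a harmonic sum producing the factor $1+|\log\tau|$ --- is exactly the mechanism of the paper's proof, and your decomposition of the local error into a leading term $\bigl(\int_0^\tau\rho A\mathrm{e}^{\rho A}\,\mathrm{d}\rho\bigr)G_n(0)$ plus an $O(\tau^2)$ remainder is a legitimate (if more explicit) version of the paper's double-integral form of the defect. However, the step you yourself flag as ``the main obstacle'' is a genuine gap as written: you keep the linearized reaction maps $J_i=I+\tau B_i$ \emph{inside} the propagation operators $\mathcal S_{n,j}=\mathrm{e}^{\tau A}J_{n-1}\cdots\mathrm{e}^{\tau A}J_{j+1}$, and then need $\mathcal S_{n,j}$ to inherit the smoothing bound $\|\mathrm{e}^{sA}A\|\le M_1/s$ of the pure semigroup. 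Since the $B_i$ do not commute with $A$, the asserted estimate $\|\mathcal S_{n,j}(\int_0^\tau\rho A\mathrm{e}^{\rho A}\,\mathrm{d}\rho)\|\le C\tau/(n-1-j)+C\tau^2(1+|\log\tau|)$ does not follow from anything you have proved; the telescoping you sketch produces correction terms in which the factor $A$ sits to the \emph{right} of a $B_i$, and smoothing cannot be applied to those without further structure. The proof is incomplete at precisely the point where the order reduction must be controlled.

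The paper avoids this obstacle entirely by a different bookkeeping of the same recursion: it writes $\mathcal L_\tau u_n-\mathcal L_\tau u(t_n)=\mathrm{e}^{\tau A}e_n+\tau E(u_n,u(t_n))$ with $\|E\|\le C\|e_n\|$ and then \emph{solves the recursion only for the linear part}, obtaining
\[
e_n=\mathrm{e}^{n\tau A}e_0+\sum_{k=1}^n\mathrm{e}^{(n-k)\tau A}d_k+\tau\sum_{k=0}^{n-1}\mathrm{e}^{(n-k-1)\tau A}E\bigl(u_k,u(t_k)\bigr).
\]
Now the operator multiplying each local error $d_k$ is a pure semigroup, so $\|\mathrm{e}^{(n-k)\tau A}A\|\le C/((n-k)\tau)$ applies directly and yields $C\tau^2\sum_k 1/(k\tau)\le C\tau(1+|\log\tau|)$, while the Lipschitz differences are swept into the inhomogeneous sum $\tau\sum_k\mathrm{e}^{(n-k-1)\tau A}E(\cdot)$ and absorbed by a discrete Gronwall argument. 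If you replace your $\mathcal S_{n,j}$ by this linear-part-only propagation (moving the $\tau B_ie_i$ contributions to the right-hand side), your argument closes without any commutator analysis; as it stands, the non-commutativity issue is left open and is not a routine detail.
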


\begin{proof}
Note that the classical Lie splitting corresponds to the choice of $k=0$ in equation \eqref{eq:abstract-evolution}. Thus, the local error is given by equation \eqref{eq:local-error}. Let us denote the global error by $e_n=u_n-u(t_n)$. Then
\[
e_{n+1} = \mathcal L_\tau u_n - \mathcal L_\tau u(t_n) + d_{n+1},
\]
where $d_{n+1}$ denotes the local error. Now, we have
\begin{align*}
\mathcal L_{\tau}u_n - \mathcal L_{\tau}u(t_n) &= \mathrm{e}^{\tau A} \bigl( \varphi_\tau^g(u_n) - \varphi_\tau^g(u(t_n)) \bigr) \\
&= \mathrm{e}^{\tau A}e_n + \tau E(u_n,u(t_n)),
\end{align*}
where due to the Lipschitz continuity of $g$ it holds that $\Vert E(u_n,u(t_n)) \Vert \leq C \Vert e_n \Vert$. Inserting this into the recurrence relation for the global error gives
\[
e_{n+1} = \mathrm{e}^{\tau A}e_n + d_{n+1} + \tau E(u_n,u(t_n)).
\]
The crucial point here is that we now solve only for the linear part (as in this case we know that the parabolic smoothing property holds true). This gives
\[
e_{n} =  \mathrm{e}^{n\tau A}e_0 + \sum_{k=1}^n \mathrm{e}^{(n-k)\tau A}d_k + \tau \sum_{k=0}^{n-1} \mathrm{e}^{(n-k-1)\tau A}E(u_k,u(t_k)).
\]
Using the parabolic smoothing property for the linear evolution, i.e.~using that
\[
\Vert \mathrm{e}^{t A}(-A)^\alpha \Vert \leq Ct^{-\alpha},\qquad\alpha\ge 0
\]
for all $t\in(0,T]$, we get
\[
\Vert e_n \Vert \leq C\|e_0\| + C \tau^2 \sum_{k=1}^{n-1} \frac{1}{k \tau} + C\tau + C\tau \sum_{k=0}^{n-1} \Vert e_k \Vert,
\]
where the second term can by estimated by $C \tau (1+\left\vert \log \tau \right\vert)$ which together with Gronwall's inequality (using $\|e_0\|=0$) gives the desired bound.
\end{proof}

Note that in the setting of Theorem \ref{thm:lie-classic-o1} the classical Lie splitting is consistent of order zero (that is, the local error is proportional to $\tau$). However, due to the parabolic smoothing property we can employ the expansion up to $\tau^2$ and bound the remainder. The same proof can be conducted in order to show that the modified Lie splitting is convergent of order one. However, in this case a more direct proof is possible as the method is consistent of order one. This gives the following result.
\begin{theorem}
Under Assumption~\ref{ass1} 
the modified Lie splitting is first-order convergent.
\end{theorem}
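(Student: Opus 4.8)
The plan is to mimic the structure of the proof of Theorem~\ref{thm:lie-classic-o1}, but to exploit the fact that the modified splitting now satisfies the compatibility condition $g(t,0)=0$, so that $A\,g(t,u(t))$ is bounded along the exact solution. Concretely, I would start from the local error representation~\eqref{eq:local-error}, which is valid for the modified splitting with $k\neq 0$ as well. The first term, $\int_0^\tau \mathrm{e}^{\tau A}(\tau-s)w''(t_n+s)\,\mathrm{d}s$, is $\mathcal O(\tau^2)$ provided $w''$ stays bounded, and $w'' = \partial_s g(t_n+s,w(t_n+s))$ is controlled by the continuous differentiability of $g$ together with the boundedness of $w$ on $[0,T]$ (the latter following from the local Lipschitz bound on $g$ and a Gronwall argument over a single step, uniformly in $n$). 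The second term, $\int_0^\tau\!\int_0^s \ell_n'(\xi)\,\mathrm{d}\xi\,\mathrm{d}s$, is where the compatibility condition enters: differentiating~\eqref{eq:le-ln} gives
$$
\ell_n'(s) = -A\,\mathrm{e}^{(\tau-s)A} g(t_n+s,u(t_n+s)) + \mathrm{e}^{(\tau-s)A}\tfrac{\mathrm d}{\mathrm ds} g(t_n+s,u(t_n+s)),
$$
and since $g(t,0)=0$ with $g(t,\cdot)$ smooth, $g(t,u(t))$ lies in $\mathcal D(A)$ with a norm bounded uniformly on $[0,T]$ (this is precisely the step that fails for the classical splitting). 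Hence $\|\ell_n'(s)\|\le C$ uniformly, and the double integral is $\mathcal O(\tau^2)$. Therefore the local error satisfies $\|d_{n+1}\|\le C\tau^2$.

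For the global error I would set $e_n = u_n - u(t_n)$ and, exactly as in the previous proof, write
$$
e_{n+1} = \mathrm{e}^{\tau A}e_n + d_{n+1} + \tau E(u_n,u(t_n)),\qquad \|E(u_n,u(t_n))\|\le C\|e_n\|,
$$
using the Lipschitz continuity of $\varphi^g_\tau$ (uniform over the step, from the Lipschitz bound on $g$) and the boundedness of $\|\mathrm{e}^{\tau A}\|$. Solving the recursion yields
$$
e_n = \mathrm{e}^{n\tau A}e_0 + \sum_{k=1}^n \mathrm{e}^{(n-k)\tau A} d_k + \tau\sum_{k=0}^{n-1}\mathrm{e}^{(n-k-1)\tau A}E(u_k,u(t_k)).
$$
With $e_0=0$, $\|\mathrm{e}^{tA}\|\le C$ and $\|d_k\|\le C\tau^2$, the first sum is bounded by $C\,n\tau^2 \le CT\tau$; note that here, unlike in Theorem~\ref{thm:lie-classic-o1}, no parabolic-smoothing factor $(-A)^\alpha$ is needed, so no logarithmic term appears. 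A discrete Gronwall inequality applied to $\|e_n\|\le C\tau + C\tau\sum_{k=0}^{n-1}\|e_k\|$ then gives $\|e_n\|\le C\tau$ for $0\le n\tau\le T$, which is first-order convergence.

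The main obstacle is the verification that $g(t,u(t))\in\mathcal D(A)$ with a uniform bound on $[0,T]$ — i.e., that the compatibility condition $g(t,0)=0$ genuinely propagates to regularity of the composition sufficient to apply $A$. This requires that the exact solution $u(t)$ be smooth enough in space (so that $g(t,u(t))$ inherits the boundary behaviour making it lie in $H^2\cap H^1_0$ in the concrete case), which under Assumption~\ref{ass1} follows from the smoothing of the analytic semigroup and \cite[Thm.~3.5.2]{henry81}; I would state this regularity explicitly as the one nontrivial input. Everything else — the single-step boundedness of $w$, the Lipschitz estimate for $E$, and the discrete Gronwall step — is routine and parallels the classical case.
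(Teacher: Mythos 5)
Your proposal is correct and follows exactly the route the paper intends: it uses the compatibility condition $g(t,0)=0$ to bound $A\,g(t,u(t))$ in the local error representation \eqref{eq:local-error}, concludes consistency of order one, and then closes with the standard stability recursion and discrete Gronwall argument without needing parabolic smoothing --- which is precisely the ``more direct proof'' the paper alludes to after Theorem~\ref{thm:lie-classic-o1}. The paper only sketches this argument, so your write-up is a faithful (and more detailed) version of the same approach.
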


\subsection{Strang and modified Strang splitting}
The above analysis for the Lie splitting also explains the behavior of the Strang splitting. For the classical Strang splitting the local error is not improved and thus we still only obtain order one (in the maximum norm). On the other hand the modified Strang splitting is consistent of order one but, due to the parabolic smoothing property, is convergent of order two. This behavior can be easily observed in numerical tests (see Table~\ref{fig:consistency}). To show this analytically is the purpose of this section.

The Strang splitting scheme for \eqref{eq:abstract-evolution} is defined by\footnote{The version of the Strang splitting with the reversed order of the partial flows can be analyzed by proceeding in a similar fashion.}
\[
\mathcal S_\tau z = \varphi_{\tau/2}^{A,k}\Big( \varphi_\tau^g\Big(\varphi_{\tau/2}^{A,k}(z)\Big) \Big).
\]
Thus, we have to solve in succession the following abstract initial value problems
\begin{alignat*}{2}
	\partial_t \overline{v}(t) &= A\overline{v}(t)+k(t),& \overline{v}(t_n)&=z \\
	\partial_t w(t) &= g(t,w),&\qquad w(t_n)&= \overline{z} = \overline{v}(t_n+\tfrac{\tau}2)\\
	\partial_t \overline{\overline{v}}(t) &= A\overline{\overline{v}}(t) + k(t), &\qquad\quad \overline{\overline{v}}(t_n+\tfrac{\tau}{2})&=w(t_n+\tau)
\end{alignat*}
which can be expanded as
\[ \overline{v}(t_n+\tfrac{\tau}{2}) = \mathrm{e}^{\frac{\tau}{2} A}z
+ \int_0^{\frac{\tau}{2}} \mathrm{e}^{(\frac{\tau}{2}-s)A}k(t_n+s)\,\mathrm{d}s
\]
and
\[ w(t_n+\tau) = \overline{z} + \tau g(t_n,\overline{z}) + \frac{\tau^2}{2}w^{\prime \prime}(t_n)
+ \frac{1}{2} \int_0^\tau (\tau-s)^2 w^{(3)}(t_n+s)\,\mathrm{d}s
\]
and
\[ \overline{\overline{v}}(t_n+\tau) = \mathrm{e}^{\frac{\tau}{2} A}w(t_n+\tau)
+ \int_0^{\frac{\tau}{2}} \mathrm{e}^{(\frac{\tau}{2}-s)A}k(t_n+\tfrac{\tau}{2}+s)\,\mathrm{d}s,
\]
respectively. Combining these expressions we get
\begin{subequations}
\begin{equation} \label{eq:strang-expansion}
\begin{aligned}
\mathcal S_{\tau}z = \mathrm{e}^{\tau A}z &+ \int_0^\tau \mathrm{e}^{(\tau-s)A}k(t_n+s)\,\mathrm{d}s +\tau \mathrm{e}^{\frac{\tau}{2}A}g(t_n,X)\\[1mm]
&+ \tfrac{\tau^2}{2}\mathrm{e}^{\frac{\tau}{2}A} \bigl( \partial_1 g(t_n,X) + \partial_2 g(t_n,X)g(t_n,X) \bigr) + \mathcal{O}(\tau^3)
\end{aligned}
\end{equation}
with
\begin{equation} \label{eq:strang-expansion-X}
X = \mathrm{e}^{\frac{\tau}{2}A}z + \int_0^{\frac{\tau}{2}} \mathrm{e}^{(\frac{\tau}{2}-s)A}k(t_n+s)\,\mathrm{d}s,
\end{equation}
\end{subequations}
where $\partial_1 g$ and $\partial_2 g$ denote the derivatives of $g$ with respect to the first and second argument, respectively. Note that the bounded remainder term, denoted by $\mathcal{O}(\tau^3)$, does not include any application of $A$. We will employ this notation in the remainder of this section.

Now, consider the expansion of the exact solution given in equation \eqref{eq:expansion-exact-1}. In case of the Lie splitting we simply used a Taylor series expansion at the left point of the interval under consideration. However, the third term in equation \eqref{eq:strang-expansion} suggests a symmetric approach. Therefore, we use the mid-point rule to obtain
\begin{equation}\label{eq:midpoint}
\begin{split}
\int_0^\tau \mathrm{e}^{(\tau-s)A} g\bigl(t_n+s,u(t_n+s)\bigr)\,\mathrm{d}s &= \tau \mathrm{e}^{\frac{\tau}{2}A} g\bigl(t_n+\tfrac{\tau}{2},u(t_n+\tfrac{\tau}{2})\bigr)  \\
&\qquad + \tfrac12 \int_0^\tau \! K(s,\tau)\,\ell_n^{\prime \prime}(s)\,\mathrm{d}s
\end{split}
\end{equation}
with the kernel $K(s,\tau)=s^2/2$ for $s<\tau/2$ and $K(s,\tau)=(\tau-s)^2/2$ for $s>\tau/2$, and $\ell_n$ as in~\eqref{eq:le-ln}. The remainder term will be discussed in some detail in the proof of
Theorem~\ref{thm:convergence-modified-strang}.

What remains to complete the consistency argument is to compare the first term on the right-hand side of equation \eqref{eq:midpoint} with the third and fourth term on the right-hand side of equation \eqref{eq:strang-expansion}. By using equation \eqref{eq:expansion-exact-1}, we obtain
\begin{align*}
g(t_n+\tfrac{\tau}{2},u(t_n+\tfrac{\tau}{2})) &- g(t_n,X) \\
&= \tfrac{\tau}{2}\partial_1 g(t_n,X) + \partial_2 g(t_n,X)\bigl(u(t_n+\tfrac{\tau}{2})-X\bigr) + \mathcal{O}(\tau^2) \\[1.5mm]
&= \tfrac{\tau}{2}\partial_1 g(t_n,X) + \tfrac{\tau}{2}\partial_2 g(t_n,X)g(t_n,u) \\
&\qquad +  \partial_2 g(t_n,X)\int_0^\tau\! \int_0^s \ell_n^{\prime}(\xi)\,\mathrm{d}\xi\mathrm{d}s + \mathcal{O}(\tau^2),
\end{align*}
which is the desired result.

We will employ the following assumption on the data of~\eqref{eq:di-rea}.
\begin{assumption} \label{ass2}
Let $D$ be a strongly elliptic differential operator with smooth coefficients, $f$ twice continuously differentiable, $b$ continuously differentiable, and let us assume that $u_0$ and $Du_0$ are spatially smooth and satisfy the boundary conditions.
\end{assumption}

We are now in the position to state the convergence result for Strang splitting.

\begin{theorem}[Convergence of the modified Strang splitting] \label{thm:convergence-modified-strang}
Under Assumption~\ref{ass2} the modified Strang splitting scheme is convergent of order $\tau^2 \left\vert \log \tau \right\vert$, i.e., the global error satisfies the bound
$$
\|u_n-u(t_n)\| \le C\tau^2 (1+\left\vert \log \tau \right\vert), \qquad 0\le n\tau \le T,
$$
where the constant $C$ depends on $T$ but is independent of $\tau$ and $n$.
\end{theorem}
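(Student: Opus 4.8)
The plan is to mirror the proof of Theorem~\ref{thm:lie-classic-o1}: derive a representation of the local error, insert it into the recursion for the global error $e_n=u_n-u(t_n)$, and use the parabolic smoothing of $\mathrm{e}^{tA}$ to absorb the powers of $A$ that occur in the local error. Just as for the classical Lie splitting, the modified Strang scheme is only consistent of order one (its local error is $\mathcal{O}(\tau^2)$), and it is the smoothing property that lifts this to the asserted second-order global estimate, at the price of a logarithmic factor.

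\emph{Local error.} Subtracting the exact expansion \eqref{eq:expansion-exact-1} (with $z=u(t_n)$) from the Strang expansion \eqref{eq:strang-expansion}, the linear contributions $\mathrm{e}^{\tau A}z$ and $\int_0^\tau\mathrm{e}^{(\tau-s)A}k(t_n+s)\,\mathrm{d}s$ cancel exactly, so that the local error $d_{n+1}=\mathcal{S}_\tau u(t_n)-u(t_n+\tau)$ stems only from the mismatch of the $g$-contributions. Combining the mid-point identity \eqref{eq:midpoint} with the Taylor expansion of $g(t_n+\tfrac{\tau}{2},u(t_n+\tfrac{\tau}{2}))-g(t_n,X)$ derived above, and using $X=u(t_n)+\mathcal{O}(\tau)$ (so that $g(t_n,X)-g(t_n,u(t_n))=\mathcal{O}(\tau)$), one arrives at
\[
d_{n+1}=-\tfrac12\int_0^\tau K(s,\tau)\,\ell_n''(s)\,\mathrm{d}s+\rho_{n+1},\qquad \|\rho_{n+1}\|\le C\tau^3,
\]
with $\ell_n$ as in \eqref{eq:le-ln}. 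Here $\rho_{n+1}$ absorbs the $A$-free remainder $\mathcal{O}(\tau^3)$ of \eqref{eq:strang-expansion}, the quadratic Taylor remainder multiplied by $\tau$, the contribution of $g(t_n,X)-g(t_n,u(t_n))$, and the term $\tau\,\mathrm{e}^{\frac{\tau}{2}A}\partial_2 g(t_n,X)\int_0^\tau\!\int_0^s\ell_n'(\xi)\,\mathrm{d}\xi\,\mathrm{d}s$, which is of size $\tau^3$ since $\ell_n'$ is bounded (see below); none of these terms involves an application of $A$. Differentiating \eqref{eq:le-ln} and writing $h(s)=g(t_n+s,u(t_n+s))$ gives $\ell_n'(s)=\mathrm{e}^{(\tau-s)A}\bigl(h'(s)-Ah(s)\bigr)$ and $\ell_n''(s)=\mathrm{e}^{(\tau-s)A}\bigl(h''(s)-2Ah'(s)+A^2h(s)\bigr)$.

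\emph{Regularity input.} Under Assumption~\ref{ass2}, $g$ is twice and $k$ once continuously differentiable, and by parabolic regularity theory together with the compatibility imposed on $Du_0$ (cf.~\cite[Ch.~3]{henry81}), the solution $u$ is twice continuously differentiable in time with $u''$ bounded on $[0,T]$. The compatibility condition $g(t,0)=0$ forces $\partial_1 g(t,0)=0$, and since $\wt{u}$ vanishes identically on $\partial\Omega$ so does $\wt{u}'$; hence $h(t)=g(t,u(t))$ and $h'(t)=\partial_1 g(t,u(t))+\partial_2 g(t,u(t))\,u'(t)$ vanish on $\partial\Omega$ and, by the smoothness of the data, take values in $\mathcal{D}(A)=H^2(\Omega)\cap H^1_0(\Omega)$ with norms bounded uniformly in $t$; moreover $h''$ is uniformly bounded. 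Denote by $M$ a common bound for $\|Ah\|$, $\|Ah'\|$ and $\|h''\|$ on $[0,T]$. It is exactly the membership $h(t)\in\mathcal{D}(A)$ — which holds only for the modified splitting, where the compatibility condition is in force — that the smoothing estimate below turns into second order; for the classical splitting $g(t,0)\ne0$ and this fails.

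\emph{Global error and smoothing.} As in the Lie case, stability of the scheme gives
\[
\mathcal{S}_\tau u_n-\mathcal{S}_\tau u(t_n)=\mathrm{e}^{\tau A}e_n+\tau E_n,\qquad \|E_n\|\le C\|e_n\|,
\]
which follows from the Lipschitz continuity of $g$ on a bounded tube around $u$ (into which the numerical solution is kept by a routine induction) and the boundedness of $\mathrm{e}^{tA}$. Solving this recursion with $e_0=0$ yields $e_n=\sum_{k=1}^{n}\mathrm{e}^{(n-k)\tau A}d_k+\tau\sum_{k=0}^{n-1}\mathrm{e}^{(n-1-k)\tau A}E_k$. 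For the $\rho_k$-part of $d_k$ and for the $h_k''$- and $Ah_k'$-parts of $\ell_k''$ one only needs $\|\mathrm{e}^{tA}\|\le C$ and $\int_0^\tau K(s,\tau)\,\mathrm{d}s\le C\tau^3$, which gives a total contribution $\le CT\tau^2$. For the $A^2h_k$-part we write $A^2h_k=(-A)\bigl[(-A)h_k\bigr]$ with $\|(-A)h_k(s)\|\le M$, use the smoothing bound $\|\mathrm{e}^{tA}(-A)\|\le Ct^{-1}$ and $(n-k+1)\tau-s\ge(n-k)\tau$ for $0\le s\le\tau$, and obtain
\[
\Bigl\|\mathrm{e}^{(n-k)\tau A}\!\int_0^\tau K(s,\tau)\,\mathrm{e}^{(\tau-s)A}A^2h_k(s)\,\mathrm{d}s\Bigr\|\le CM\!\int_0^\tau\!\frac{K(s,\tau)}{(n-k+1)\tau-s}\,\mathrm{d}s\le\frac{CM\tau^2}{\max(n-k,1)}.
\]
Summing over $k$ produces the harmonic sum $\sum_{j=1}^{n}j^{-1}\le 1+\log n\le 1+|\log\tau|+\log T$, which is the source of the logarithm. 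Hence $\sum_{k=1}^{n}\|\mathrm{e}^{(n-k)\tau A}d_k\|\le C\tau^2(1+|\log\tau|)$, so that
\[
\|e_n\|\le C\tau^2(1+|\log\tau|)+C\tau\sum_{k=0}^{n-1}\|e_k\|,
\]
and the discrete Gronwall inequality yields the claim. The main obstacle is the regularity step: one has to establish from Assumption~\ref{ass2} that $g(t,u(t))$ and $\partial_t g(t,u(t))$ belong to $\mathcal{D}(A)$ uniformly on $[0,T]$ and that $\partial_t^2 g(t,u(t))$ is bounded — precisely the place where the compatibility $g(t,0)=0$ and the homogeneous boundary condition for $\wt{u}$ enter.
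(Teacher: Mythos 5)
Your proposal is correct and follows essentially the same route as the paper: the local error reduces to the mid-point kernel term containing $\mathrm{e}^{(\tau-s)A}A^2g$ plus an $A$-free $\mathcal{O}(\tau^3)$ remainder, the compatibility condition $g(t,0)=0$ absorbs one power of $A$, and the remaining power is handled by parabolic smoothing in the global error recursion, producing the harmonic sum and hence the logarithmic factor before Gronwall. The paper only sketches this argument in a few lines; your write-up supplies the details (the splitting of $\ell_n''$, the $\max(n-k,1)$ treatment of the last term, the regularity of $h$ and $h'$) in a way that is consistent with the paper's intent.
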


\begin{proof}
Due to the fulfilled compatibility condition $g(t,0)=0$ we can bound a single application of $A$. This, however, is not sufficient as the remainder term in \eqref{eq:midpoint} includes an expression of the form
\[
\mathrm{e}^{(\tau-s)A}A^2g(t_n+s,u(t_n+s)).
\]
Thus, the modified Strang splitting is only consistent of order one (i.e., the local error is proportional to $\tau^2$). Now, similarly to the proof of Theorem \ref{thm:lie-classic-o1} we can use parabolic smoothing to bound the application of the remaining $A$. This shows that the modified Strang splitting is convergent of order two.
\end{proof}

In the following section (see Table~\ref{fig:l1l2}) we will present numerical simulations that show order reduction for the classical Strang splitting to approximately order $1.5$ for the discrete $L^1$ norm and to order $1.25$ for the discrete $L^2$ norm. Such an error behavior can be explained as follows: recall that we have to bound terms of the form
\[
I=\sum_{k=0}^{n-1} \mathrm{e}^{(n-k-1)\tau A} \int_0^\tau \! K(s,\tau)\,\mathrm{e}^{(\tau-s)A}A^2g(t_k+s,u(t_k+s))\,\mathrm{d}s.
\]
For this purpose, we have to estimate
$$
I_k = \int_0^\tau \! K(s,\tau)\,\mathrm{e}^{(\tau-s) A}(-A)^{p+\eps} (-A)^{1-p-\eps} g(t_k+s,u(t_k+s))\,\mathrm{d}s.
$$
Using the explicit form of the kernel, the parabolic smoothing property and the fact that a spatially smooth function lies in the domain of $(-A)^{1-p-\eps}$ with $p=\tfrac{1}{2}$ in $L^1$ and $p=\tfrac{3}{4}$ in $L^2$ for $\eps>0$ arbitrarily small (see \cite{fujiwara1967, grisvard1967}) we obtain that
$$
\|I_k\| \le C \tau^{3-p-\epsilon},\qquad \|AI_k\|\le C \tau^{2-p-\epsilon}.
$$
Taking all these bound together and using once more the parabolic smoothing property, we get
\begin{align*}
\|I\| &\le C \sum_{k=0}^{n-2} \|\mathrm{e}^{(n-k-1)\tau A}A \|\cdot \|I_k\| + \|AI_{n-1}\|\\
&\le C \tau^{2-p-\epsilon}.
\end{align*}
This argument proves the orders observed in Table~\ref{fig:l1l2}~(left) even without requiring the compatibility condition $g(t,0)=0$.

\begin{table}
\protect\caption{Problem \eqref{eq:di-rea} with $f(u)=u^2$, $u_{0}(x)=1+\sin^{2}\pi x$, and $b=1$ is discretized in
space using $500$ grid points. The error in the discrete infinity norm is computed after a single time step by
comparing the numerical solution to a reference solution which uses multiple smaller time steps. \label{fig:consistency}}
\begin{centering}
\begin{tabular}{rrrrrr}
& \multicolumn{2}{c}{Lie} && \multicolumn{2}{c}{Lie (modified)}  \\
\cline{2-3} \cline{5-6}
\multicolumn{1}{l}{step size} & \multicolumn{1}{l}{local $l^{\infty}$ error} & \multicolumn{1}{l}{local order} & &
\multicolumn{1}{l}{local $l^{\infty}$ error} & \multicolumn{1}{l}{local order} \\
\hline
6.250e-03   &  1.250e-03   &  --     & &  7.682e-05   &  --  		\\
3.125e-03   &  6.237e-04   &  1.0033   & &  1.927e-05   &  1.9952   \\
1.563e-03   &  3.101e-04   &  1.0082   & &  4.826e-06   &  1.9974   \\
7.813e-04   &  1.551e-04   &  0.99946  & &  1.208e-06   &  1.9983 	\\
\end{tabular}

\vspace{2mm}
\begin{tabular}{rrrrrr}
& \multicolumn{2}{c}{Strang} && \multicolumn{2}{c}{Strang (modified)}  \\
\cline{2-3} \cline{5-6}
\multicolumn{1}{l}{step size} & \multicolumn{1}{l}{local $l^{\infty}$ error} & \multicolumn{1}{l}{local order} & &
\multicolumn{1}{l}{local $l^{\infty}$ error} & \multicolumn{1}{l}{local order} \\
\hline
6.250e-03   &  2.723e-03   &  --    &  &  3.716e-06   &  --   \\
3.125e-03   &  1.283e-03   &  1.0858  &  &  9.393e-07   &  1.9841 \\
1.563e-03   &  5.891e-04   &  1.123   &  &  2.352e-07   &  1.9978 \\
7.813e-04   &  2.602e-04   &  1.1789  &  &  5.866e-08   &  2.0034 \\
\end{tabular}

\par\end{centering}
\end{table}

\section{Numerical results (1D)}\label{sec:Numerical-results}

In this section we will present a number of numerical results for
the diffusion-reaction problem \eqref{eq:di-rea} with
$$
f(u)=u^2
$$
on $\Omega=[0,1]$, where $D$ is the classical centered second-order
finite difference approximation of the Laplacian. Let us denote the
value of the solution at the left boundary by $b_{0}$ and at the right
boundary by $b_{1}$. In all the simulation we will refer to the classical
splitting approach by Lie and Strang, respectively, while we refer
to the schemes introduced in section~\ref{sec:Theory} by Lie (modified)
and Strang (modified), respectively.

\begin{table}[t]
\caption{Diffusion-reaction equation with $u_{0}(x)=1+\sin^{2}\pi x$, $500$ grid points,
and $b_{0}=b_{1}=1$. The error in the discrete infinity norm
is computed at $t=0.1$ by comparing the numerical solution to a reference solution computed
with the modified Strang splitting. \label{fig:Dirichlet=00003D1}}
\begin{centering}
\begin{tabular}{rrrrrr}
& \multicolumn{2}{c}{Lie} && \multicolumn{2}{c}{Lie (modified)}  \\
\cline{2-3} \cline{5-6}
\multicolumn{1}{l}{step size} & \multicolumn{1}{l}{$l^{\infty}$ error} & \multicolumn{1}{l}{order} & &
\multicolumn{1}{l}{$l^{\infty}$ error} & \multicolumn{1}{l}{order} \\
\hline
2.000e-02   & 2.872e-01   & --      &    & 2.144e-01   & --          \\
1.000e-02   & 3.546e-03   & 6.3396  &    & 2.166e-03   & 6.6297      \\
5.000e-03   & 1.957e-03   & 0.85752 &    & 1.090e-03   & 0.99101     \\
2.500e-03   & 1.051e-03   & 0.89743 &    & 5.465e-04   & 0.99554     \\
1.250e-03   & 5.526e-04   & 0.92694 &    & 2.737e-04   & 0.99778     \\
6.250e-04   & 2.864e-04   & 0.94837 &    & 1.369e-04   & 0.99889     \\
3.125e-04   & 1.468e-04   & 0.96367 &    & 6.849e-05   & 0.99944     \\
\end{tabular}

\vspace{2mm}
\begin{tabular}{rrrrrr}
& \multicolumn{2}{c}{Strang} && \multicolumn{2}{c}{Strang (modified)}  \\
\cline{2-3} \cline{5-6}
\multicolumn{1}{l}{step size} & \multicolumn{1}{l}{$l^{\infty}$ error} & \multicolumn{1}{l}{order} & &
\multicolumn{1}{l}{$l^{\infty}$ error} & \multicolumn{1}{l}{order} \\
\hline
2.000e-02   & 9.371e-03   & --      &    & 3.013e-05   & --          \\
1.000e-02   & 4.519e-03   & 1.0521  &    & 7.540e-06   & 1.9985      \\
5.000e-03   & 2.156e-03   & 1.0678  &    & 1.885e-06   & 1.9999      \\
2.500e-03   & 1.010e-03   & 1.0939  &    & 4.709e-07   & 2.0011      \\
1.250e-03   & 4.603e-04   & 1.1337  &    & 1.173e-07   & 2.0047      \\
6.250e-04   & 2.013e-04   & 1.1931  &    & 2.896e-08   & 2.0185      \\
3.125e-04   & 8.281e-05   & 1.2817  &    & 6.923e-09   & 2.0647      \\
\end{tabular}

\par
\end{centering}
\end{table}

\begin{example}[One-dimensional problem with $b_{0}=1$, $b_{1}=1$]
Even for this simple problem we can clearly observe reduction to order
one for the Strang splitting. The numerical results are given in Table
\ref{fig:Dirichlet=00003D1}. We observe that for the Lie splitting
the modified scheme results in a decrease in the error by about a
factor of $2$ compared to the classical Lie splitting. The modified
Strang splitting is a method of order two.
In Table~\ref{fig:l1l2} the error in the discrete $L^1$ and $L^2$ norm is shown for the same configuration. As expected, in the discrete $L^1$ norm we observe reduction to approximately $1.5$ (for the Strang splitting scheme), whereas in the discrete $L^2$ norm we observe order reduction to approximately order $1.25$.

\begin{table}
\caption{Diffusion-reaction equation with $u_{0}(x)=1+\sin^{2}\pi x$, $500$ grid points,
and $b_{0}=b_{1}=1$. The error in the discrete $L^1$ and discrete $L^2$ norm
is computed at $t=0.1$ by comparing the numerical solution to a reference solution computed
with the modified Strang splitting. \label{fig:l1l2}}
\begin{centering}
\begin{tabular}{rrrrrr}
& \multicolumn{2}{c}{Strang} && \multicolumn{2}{c}{Strang(modified)}  \\
\cline{2-3} \cline{5-6}
\multicolumn{1}{l}{step size} & \multicolumn{1}{l}{$l^{1}$ error} & \multicolumn{1}{l}{order} & &
\multicolumn{1}{l}{$l^{1}$ error} & \multicolumn{1}{l}{order} \\
\hline
2.000e-02   & 4.679e-04   & --          & & 9.452e-06   & --          \\
1.000e-02   & 1.608e-04   & 1.5409      & & 2.362e-06   & 2.0008      \\
5.000e-03   & 5.511e-05   & 1.5449      & & 5.937e-07   & 1.992       \\
2.500e-03   & 1.884e-05   & 1.5487      & & 1.490e-07   & 1.9946      \\
1.250e-03   & 6.407e-06   & 1.556       & & 3.711e-08   & 2.0052      \\
\end{tabular}

\vspace{2mm}
\begin{tabular}{rrrrrr}
& \multicolumn{2}{c}{Strang} && \multicolumn{2}{c}{Strang (modified)}  \\
\cline{2-3} \cline{5-6}
\multicolumn{1}{l}{step size} & \multicolumn{1}{l}{$l^{2}$ error} & \multicolumn{1}{l}{order} & &
\multicolumn{1}{l}{$l^{2}$ error} & \multicolumn{1}{l}{order} \\
\hline
2.000e-02   & 1.524e-03   & --          & & 1.320e-05   & --          \\
1.000e-02   & 6.337e-04   & 1.2659      & & 3.303e-06   & 1.999       \\
5.000e-03   & 2.628e-04   & 1.2697      & & 8.264e-07   & 1.9987      \\
2.500e-03   & 1.085e-04   & 1.2766      & & 2.066e-07   & 1.9998      \\
1.250e-03   & 4.444e-05   & 1.2875      & & 5.152e-08   & 2.0039      \\
\end{tabular}

\par\end{centering}
\end{table}

\end{example}

\begin{example}[One-dimensional problem with time dependent $b_{0}=b_{1}$]
We now consider a time dependent problem where both the left and the
right boundary are set to $b_0(t) = b_1(t) = 1+\sin 5t$.
The numerical results are given in Table~\ref{fig:Dirichlet-timedep1}.
They show order two for the modified Strang splitting and order one
for the classical Strang splitting.

\begin{table}
\protect\caption{Diffusion-reaction equation with $u_{0}(x)=1+\sin^{2}\pi x$, $500$ grid points,
and $b_{0}(t)=b_{1}(t)=1+\sin 5t$. The error in the discrete infinity
norm is computed at $t=0.1$ by comparing the numerical solution to a reference solution
computed with the modified Strang splitting. \label{fig:Dirichlet-timedep1}}
\begin{centering}
\begin{tabular}{rrrrrr}
& \multicolumn{2}{c}{Lie} && \multicolumn{2}{c}{Lie (modified)}  \\
\cline{2-3} \cline{5-6}
\multicolumn{1}{l}{step size} & \multicolumn{1}{l}{$l^{\infty}$ error} & \multicolumn{1}{l}{order} & &
\multicolumn{1}{l}{$l^{\infty}$ error} & \multicolumn{1}{l}{order} \\
\hline
2.000e-02   & 2.872e-01   & --          & & 2.086e-01   & --          \\
1.000e-02   & 7.207e-03   & 5.3164      & & 8.593e-03   & 4.6015      \\
5.000e-03   & 4.053e-03   & 0.83053     & & 4.266e-03   & 1.0104      \\
2.500e-03   & 2.204e-03   & 0.87894     & & 2.125e-03   & 1.0052      \\
1.250e-03   & 1.172e-03   & 0.91124     & & 1.061e-03   & 1.0026      \\
6.250e-04   & 6.117e-04   & 0.93786     & & 5.298e-04   & 1.0013      \\
3.125e-04   & 3.158e-04   & 0.95371     & & 2.648e-04   & 1.0007      \\
\end{tabular}

\vspace{2mm}
\begin{tabular}{rrrrrr}
& \multicolumn{2}{c}{Strang} && \multicolumn{2}{c}{Strang (modified)}  \\
\cline{2-3} \cline{5-6}
\multicolumn{1}{l}{step size} & \multicolumn{1}{l}{$l^{\infty}$ error} & \multicolumn{1}{l}{order} & &
\multicolumn{1}{l}{$l^{\infty}$ error} & \multicolumn{1}{l}{order} \\
\hline
2.000e-02   & 2.060e-02   & --          & & 4.399e-04   & --          \\
1.000e-02   & 9.913e-03   & 1.0554      & & 1.099e-04   & 2.0005      \\
5.000e-03   & 4.724e-03   & 1.0694      & & 2.748e-05   & 2.0002      \\
2.500e-03   & 2.212e-03   & 1.0947      & & 6.867e-06   & 2.0005      \\
1.250e-03   & 1.008e-03   & 1.1341      & & 1.714e-06   & 2.002       \\
6.250e-04   & 4.407e-04   & 1.1932      & & 4.263e-07   & 2.0079      \\
3.125e-04   & 1.813e-04   & 1.2817      & & 1.043e-07   & 2.0316      \\
\end{tabular}

\par\end{centering}
\end{table}

\end{example}

\begin{example}[One-dimensional problem with one constant and one
time dependent boundary condition] In this example we consider a
fixed left boundary condition $b_{0}=0.5$ and a time dependent right boundary
condition $b_{1}=1+\sin 20\pi t$.
The numerical results are shown in Table \ref{fig:Dirichlet-timedep2}.
This proves to be a more challenging numerical test. However, as before,
the observed results show order two for the modified Strang splitting
and only order one for the classical Strang splitting scheme.

\begin{table}
\protect\caption{Diffusion-reaction equation with $u_{0}(x)=\tfrac{1}{2}+\tfrac{1}{2}x$,
$500$ grid points, and $b_{0}(t)=0.5$, $b_{1}(t)=1+\sin 20\pi t$. The error in the discrete
infinity norm at $t=0.1$ is determined by comparing the numerical solution to a reference
solution with step size $\tau=5\cdot10^{-5}$ computed with the Strang
and modified Strang splitting, respectively. \label{fig:Dirichlet-timedep2}}
\begin{centering}
\begin{tabular}{rrrrrr}
& \multicolumn{2}{c}{Lie} && \multicolumn{2}{c}{Lie (modified)}  \\
\cline{2-3} \cline{5-6}
\multicolumn{1}{l}{step size} & \multicolumn{1}{l}{$l^{\infty}$ error} & \multicolumn{1}{l}{order} & &
\multicolumn{1}{l}{$l^{\infty}$ error} & \multicolumn{1}{l}{order} \\
\hline
2.000e-02    & 5.525e-02   & --         & & 1.036e-01   & --          \\
1.000e-02    & 1.728e-03   & 4.9985     & & 2.609e-03   & 5.3112      \\
5.000e-03    & 8.643e-04   & 0.99986    & & 8.282e-04   & 1.6556      \\
2.500e-03    & 5.626e-04   & 0.61942    & & 3.257e-04   & 1.3463      \\
1.250e-03    & 3.426e-04   & 0.71549    & & 2.041e-04   & 0.67443     \\
6.250e-04    & 1.991e-04   & 0.78287    & & 1.184e-04   & 0.78556     \\
\end{tabular}

\vspace{2mm}
\begin{tabular}{rrrrrr}
& \multicolumn{2}{c}{Strang} && \multicolumn{2}{c}{Strang (modified)}  \\
\cline{2-3} \cline{5-6}
\multicolumn{1}{l}{step size} & \multicolumn{1}{l}{$l^{\infty}$ error} & \multicolumn{1}{l}{order} & &
\multicolumn{1}{l}{$l^{\infty}$ error} & \multicolumn{1}{l}{order} \\
\hline
2.000e-02   & 9.068e-03   & --          & & 3.757e-03   & --          \\
1.000e-02   & 4.405e-03   & 1.0418      & & 9.591e-04   & 1.9699      \\
5.000e-03   & 2.111e-03   & 1.0609      & & 2.410e-04   & 1.9927      \\
2.500e-03   & 9.913e-04   & 1.0907      & & 6.031e-05   & 1.9986      \\
1.250e-03   & 4.509e-04   & 1.1365      & & 1.506e-05   & 2.0013      \\
6.250e-04   & 1.951e-04   & 1.2083      & & 4.067e-06   & 1.889       \\
\end{tabular}

\par\end{centering}
\end{table}

\end{example}

\section{Numerical results (2D)}\label{sec:Numerical-results-(2D)}

In this section we will present a number of numerical results for
the problem given in \eqref{eq:di-rea} with
\begin{equation}\label{eq:data2d}
f(u)= u^2, \qquad b=u_{0}\vert_{\partial\Omega}
\end{equation}
on $\Omega=[0,1]^{2}$, where $D$ is a finite element approximation
of order $2$ of the Laplacian (we use the libmesh finite element
library). Thus, we will limit ourselves here to the case of time independent
boundary conditions. In all simulations we will use the classical Runge--Kutta
method of order four to integrate the nonlinearity in time and the
Crank--Nicolson method to integrate the linear diffusion. In the latter
case we conduct $10$ substeps per splitting step. This allows us
to observe the error due to the splitting method only (and avoid any
interference from the second-order error of the Crank--Nicolson method).
The continuation $z$ is precomputed by a Poisson solver and is subsequently
used in each time step.

\begin{example}[Two-dimensional problem with $b=1$] This example is an
extension of the one-dimensional problem. We set the boundary condition
equal to $1$ everywhere and choose as the initial value
\[
u_{0}(x,y)=1+\sin^{2}(\pi x)\sin^{2}(\pi y).
\]
The numerical results are shown in Table \ref{fig:table-2d-1} and
confirm the order reduction in case of the classical Strang splitting
as well as that the modified Strang scheme proposed in this paper is of
second order. Let us also note that the modified Lie splitting is more accurate
by about a factor of $\,3.5$ as compared to the classical Lie splitting.

\begin{table}
\protect\caption{Diffusion-reaction equation with $u_{0}(x)=1+\sin^{2}(\pi x)\sin^{2}(\pi y)$
and $b=1$. For the Lie splitting scheme $10^{4}$ quadrilateral finite
elements are employed, while for the Strang splitting scheme $2.5\cdot10^{5}$
quadrilateral finite elements are employed. The error in the discrete
infinity norm is computed at $t=0.1$ by comparing it to a reference
solution with a sufficiently small step size. \label{fig:table-2d-1}}
\begin{centering}
\begin{tabular}{rrrrrr}
& \multicolumn{2}{c}{Lie} && \multicolumn{2}{c}{Lie (modified)}  \\
\cline{2-3} \cline{5-6}
\multicolumn{1}{l}{step size} & \multicolumn{1}{l}{$l^{\infty}$ error} & \multicolumn{1}{l}{order} & &
\multicolumn{1}{l}{$l^{\infty}$ error} & \multicolumn{1}{l}{order} \\
\hline
0.1		&	1.436039e-01 & --		&&	    3.026891e-02 & --       \\
0.05	&	2.520559e-02 & 2.51028  &&	    8.189677e-03 & 1.88596\\
0.025	&	1.227821e-02 & 1.03764	&&      3.441324e-03 & 1.25084\\
0.0125	&	5.424341e-03 & 1.17858	&&      1.579346e-03 & 1.12364\\
\end{tabular}

\vspace{2mm}
\begin{tabular}{rrrrrr}
& \multicolumn{2}{c}{Strang} && \multicolumn{2}{c}{Strang (modified)}  \\
\cline{2-3} \cline{5-6}
\multicolumn{1}{l}{step size} & \multicolumn{1}{l}{$l^{\infty}$ error} & \multicolumn{1}{l}{order} & &
\multicolumn{1}{l}{$l^{\infty}$ error} & \multicolumn{1}{l}{order} \\
\hline
 0.1	&	 1.632674e-01& --       && 1.401885e-03& --       \\
 0.05	&	 1.445799e-01& 0.17537&& 3.806507e-04& 1.88083\\
 0.025	&	 1.062179e-01& 0.44484&& 9.978164e-05& 1.93162\\
 0.0125	&	 5.282179e-02& 1.00782&& 2.473435e-05& 2.01226\\
\end{tabular}

\par\end{centering}
\end{table}

\end{example}

\begin{example} [Two-dimensional problem with inhomogeneous boundary condition]
Let us consider the initial value
\begin{equation}\label{eq:ivp-long}
\begin{aligned}
u_{0}(x,y)=0.5+2.0\Bigl(\mathrm{e}^{-40(x-0.5-0.1\cos\pi y)^{2}}&+\mathrm{e}^{-35(y-0.5-0.1\sin 2\pi x)^2}\\
&-\mathrm{e}^{-35((x-0.5)^{2}+(y-0.5)^{2})}\Bigr)
\end{aligned}
\end{equation}
and the corresponding compatible time independent boundary condition.
The initial value is shown in Figure~\ref{fig:initial-value} (left) and
the reference solution at time $t=0.1$ in Figure~\ref{fig:initial-value} (right).

\begin{figure}
\begin{centering}
\includegraphics[width=6.4cm]{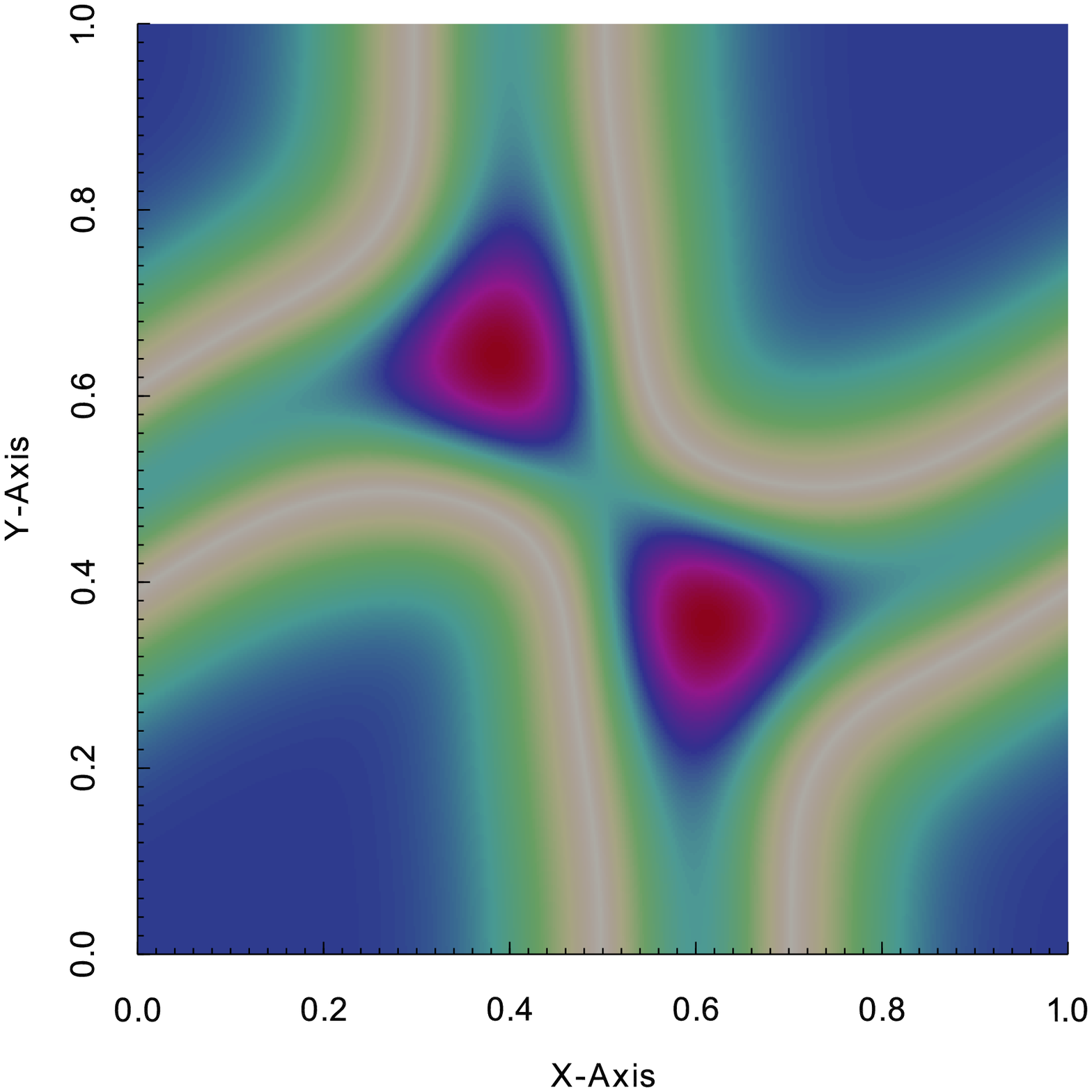}
\includegraphics[width=6.4cm]{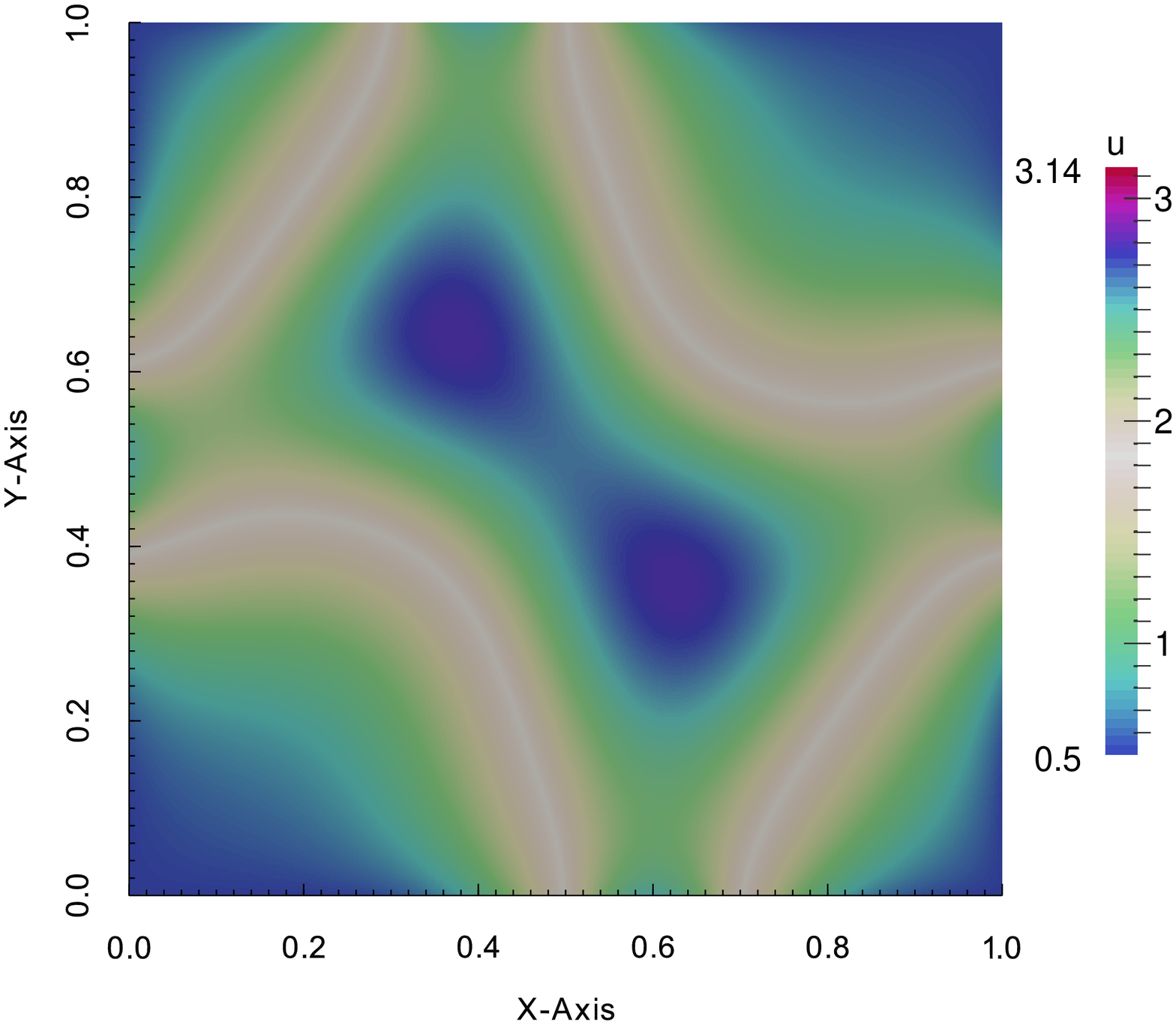}
\par\end{centering}
\vskip-3mm
\protect\caption{The initial value (left) and the reference solution at time $t=0.1$ (right) of equation~\eqref{eq:di-rea}, \eqref{eq:data2d}
with initial condition \eqref{eq:ivp-long}. The visualization was carried out with ParaView.
\label{fig:initial-value}}
\end{figure}

The results are shown in Table~\ref{fig:table-2d-inhom} and confirm
the order reduction in case of the classical Strang splitting as well
as that the modified scheme is in fact a second-order method.

\begin{table}
\protect\caption{Diffusion-reaction equation with the initial and boundary condition given in
equation (\ref{eq:ivp-long}). For the space discretization $2.5\cdot10^{5}$
quadrilateral finite elements are employed. The error in the discrete
infinity norm is computed at $t=0.1$ by comparing the numerical solution to a reference
solution with a sufficiently small step size. \label{fig:table-2d-inhom}}
\begin{centering}
%

\begin{tabular}{rrrrrr}
& \multicolumn{2}{c}{Strang} && \multicolumn{2}{c}{Strang (modified)}  \\
\cline{2-3} \cline{5-6}
\multicolumn{1}{l}{step size} & \multicolumn{1}{l}{$l^{\infty}$ error} & \multicolumn{1}{l}{order} & &
\multicolumn{1}{l}{$l^{\infty}$ error} & \multicolumn{1}{l}{order} \\
\hline
0.1    & 8.449277e-01& --      &&1.835188e-02& --\\  
0.05   & 6.570760e-01& 0.362768&&4.962590e-03& 1.88676\\
0.025  & 4.063934e-01& 0.693183&&1.263375e-03& 1.97381\\
0.0125 & 1.670386e-01& 1.2827  &&3.326822e-04& 1.92507\\
\end{tabular}

\par\end{centering}
\end{table}

\end{example}

\section{Conclusion \& Outlook}

We have presented a splitting procedure that modifies both the diffusion
as well as the reaction partial flow in order to satisfy a compatibility
condition between the boundary conditions and the reaction term. This
yields a modified Strang splitting scheme that is of order two (i.e.,
no order reduction is observed) for inhomogeneous and even time dependent Dirichlet
boundary conditions. Crucially, the modification is independent of
the numerical solution and thus still allows us to take advantage
of the attractive features the splitting approach provides. In addition,
it has been observed that this modification for Lie splitting results
in better accuracy as compared to the classical Lie splitting scheme
(in certain problems up to a factor of $3.5$). Moreover, let us note
that the scheme is trivially generalizable to systems of diffusion-reaction
equations. The convergence analysis conducted shows that no
order reduction occurs for the modified Strang splitting. Furthermore, we show
that due to the parabolic smoothing property the classical Lie and Strang splitting
schemes are convergent of order one even though they are only consistent of order zero.

In a number of practical applications (such as those stemming from
combustion problems) Neumann boundary conditions are of interest. However,
this requires a more invasive modification of the splitting approach.
We consider this as future work.

\bibliographystyle{plain}
\bibliography{papers}

\end{document}